\newtheorem{thm}{Theorem}
\newtheorem{cor}[thm]{Corollary}
\newtheorem{lem}[thm]{Lemma}
\newtheorem{prop}[thm]{Proposition}
\newtheorem{exa}[thm]{Example}
\theoremstyle{definition}
\newtheorem{defn}[thm]{Definition}
\newtheorem{rem}[thm]{Remark}
\numberwithin{equation}{section}
\newcommand{\Z}{{\mathbb Z}}
\begin{document}

\title{Weakly tripotent rings}
\author{S. Breaz, A. C\^\i mpean}

\address{Simion Breaz: "Babe\c s-Bolyai" University, Faculty of Mathematics and Computer Science, Str. Mihail Kog\u alniceanu 1, 400084, Cluj-Napoca, Romania}

\email{bodo@math.ubbcluj.ro}

\address{Simion Breaz: "Babe\c s-Bolyai" University, Faculty of Mathematics and Computer Science, Str. Mihail Kog\u alniceanu 1, 400084, Cluj-Napoca, Romania}

\email{cimpean$\_$andrada@yahoo.com}

\begin{abstract}
We study the class of rings $R$ with the property that for $x\in R$ at least one of the elements $x$ and $1+x$ are tripotent.  
\end{abstract}

\subjclass[2010]{16R50, 16A32, 16U99}

\maketitle

\section{Introduction}


An element $x$ of a ring $R$ is called \textsl{tripotent} if $x^3=x$, and a ring $R$ is \textsl{tripotent} if all its elements are tripotent. Hirano and Tominaga proved in \cite[Theorem 1]{HT} that a ring $R$ is tripotent if and only if every element of $R$ is a sum of two commuting idempotents.
The class of these rings was extended in \cite[Section 4]{trip} to the class of rings $R$ such that every element is a sum or a difference of two commuting idempotents. This is a natural approach since similar methods were applied for other classes of rings. For instance, the classes of clean rings or nil-clean rings were extended in \cite{AA} and \cite{BDZ} to the classes of weakly clean, respectively weakly nil-clean in the following ways: a ring $R$ is weakly (nil-)clean if and only if every element of $R$ is a sum or a difference of a unit (nilpotent) element and an idempotent.

Apparently the natural question which asks if we can properly extend in a similar way the classes of rings which satisfy identities which involve tripotents instead of idempotents has a negative answer, since $-u$ is tripotent whenever $u$ is tripotent. However, we want to propose such an extension, starting by the following remarks: 
\begin{enumerate}
 \item \textit{every element of a ring $R$ is a sum or a difference of two (commuting) idempotents if and only if for every element $x\in R$ at least one of the elements $x$ or $1+x$ is a sum of two (commuting) idempotents}; 
\item \textit{a ring $R$ is weakly clean if and only if for every element $x\in R$ at least one of the elements $x$ or $1+x$ is a sum of a unit and an idempotent.} 
\item \textit{a ring $R$ is weakly nil-clean if and only if for every element $x\in R$ at least one of the elements $x$ or $1+x$ is a sum of a nilpotent and an idempotent.} 
\end{enumerate}

Therefore, we will say that a ring $R$ is \textsl{weakly tripotent} if for every element $x\in R$ at least one of the elements $x$ or $1+x$ is tripotent. Two main differences between the properties of tripotent rings and  weakly tripotent rings are the following: while tripotent rings are always commutative and the class of tripotent rings is closed under direct products, these propoerties are not valid for weakly tripotent rings. One of the main result of the present paper is Theorem \ref{wtrp-gen} where we provide a characterization of commutative weakly tripotent rings as subrings of rings of the form $R_0\times R_1\times R_2$, where $R_0$ is a weakly tripotent ring without nontrivial idempotents (these rings are described in Corollary \ref{irreducible}), $R_1$ is a Boolean ring, and $R_2$ is a tripotent ring of characteristic $3$.  Moreover, we will obtain connections between the class of weakly tripotent rings and the rings studied in \cite{trip}.  In particular, it is proved in the end of the paper that the class of rings such that every element is a sum or a difference of two commuting idempotents is strictly contained in the class of weakly tripotent rings (Corollary \ref{inc-sci-wtri}).

\section{Weakly tripotent rings}

Before we will study the connections between these classes, we state some some basic properties of weakly tripotent rings.

\begin{lem}
Let $R$ be a weakly tripotent ring. Then
\begin{enumerate}[{\rm (1)}]
 \item every subring of $R$ is weakly tripotent;
 \item every homomorphic image of $R$ is weakly tripotent;
 \item $24=0$, hence $R$ has a decomposition $R=R_1\times R_2$ such that $R_1$ and $R_2$ are weakly tripotent and $8R_1=3R_2=0$;
 \end{enumerate}
\end{lem}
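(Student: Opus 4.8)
The plan is to handle (1) and (2) by a direct transfer of the defining condition, and to extract (3) from a single well-chosen instance of that condition.

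For (1), let $S$ be a subring of $R$ (so $1\in S$) and take $x\in S$. Since $x^3$ and $1+x$ are computed identically in $S$ and in $R$, whichever of $x,1+x$ is tripotent in $R$ is tripotent in $S$; hence $S$ is weakly tripotent. For (2), let $\phi\colon R\to R'$ be a surjective unital ring homomorphism and write $y=\phi(x)$. Applying $\phi$ to $x^3=x$ gives $y^3=y$, and applying it to $(1+x)^3=1+x$ gives $(1+y)^3=1+y$ (using $\phi(1)=1$). So one of $y,1+y$ is tripotent and $R'$ is weakly tripotent.

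For the first assertion of (3), I would test the weak tripotent condition on the single element $x=2$. Either $2^3=2$, which gives $6=0$, or $3^3=3$, which gives $24=0$. Since $6=0$ already forces $24=4\cdot 6=0$, in both cases $24\cdot 1_R=0$. This is the only nonroutine point, and the main (modest) obstacle is merely spotting that $x=2$ is the element producing both a ``$6$'' and a ``$24$''.

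For the decomposition I would exploit $24=8\cdot 3$ with $8$ and $3$ coprime. Set $e=9\cdot 1_R$. Because $9^2=81\equiv 9\pmod{24}$, the element $e$ is idempotent, and being a multiple of $1_R$ it is central; moreover $8e=72\cdot 1_R=0$ and $3e=27\cdot 1_R=3\cdot 1_R$, so $3(1-e)=0$. The central idempotent $e$ yields a ring decomposition $R=R_1\times R_2$ with $R_1=eR$ and $R_2=(1-e)R$, and then $8R_1=(8e)R=0$ while $3R_2=(3(1-e))R=0$. Finally $R_1$ and $R_2$ are unital homomorphic images of $R$ (the projections send $1_R$ to the respective identities $e$ and $1-e$), so they are weakly tripotent by (2).
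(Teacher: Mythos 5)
Your proof is correct, and since the paper states this lemma without proof (treating these as basic properties), your argument is exactly the standard one the authors intend: parts (1) and (2) by direct transfer of the defining identity, and part (3) by testing the element $x=2$ (either $2^3=2$ gives $6=0$, hence $24=0$, or $3^3=3$ gives $24=0$ directly), followed by the Chinese-Remainder-style splitting via the central idempotent $e=9\cdot 1_R$. No gaps; the verification that $e^2=e$, $8e=0$, $3(1-e)=0$, and that $eR$, $(1-e)R$ inherit weak tripotence from (2) is complete.
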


For the the case $R$ is of characteristic $3$ weakly tripotent rings are in fact tripotent, we can restrict our study to rings on characteristic $2^k$, $k\in\{1,2,3\}$. We refer to \cite{CS} for other extensions of other generalizations of tripotent rings of characteristic $3$.

\begin{lem}
If $R$ is a ring that is weakly tripotent and has characteristic $3$ then $R$ is tripotent, hence it is isomorphic to a subdirect product of a direct product of $\Z_3$.
\end{lem}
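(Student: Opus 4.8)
The plan is to leverage the characteristic to show that, for a ring of characteristic $3$, the property of being tripotent is \emph{invariant} under the shift $x\mapsto 1+x$, so that the ``weakly'' hypothesis carries no extra generality. First I would record the elementary expansion, valid in any ring since $1$ is central and $x$ commutes with itself,
\[
(1+x)^3 = 1 + 3x + 3x^2 + x^3 .
\]
Because $3=0$ in $R$, the two middle terms vanish and we are left with $(1+x)^3 = 1 + x^3$. Hence $1+x$ is tripotent precisely when $1+x^3 = 1+x$, that is, precisely when $x^3=x$, which is exactly the statement that $x$ itself is tripotent. So ``$x$ tripotent'' and ``$1+x$ tripotent'' are equivalent conditions in characteristic $3$.

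With this equivalence in hand the conclusion $R$ is tripotent is immediate: fix $x\in R$; by the weakly tripotent hypothesis at least one of $x$ and $1+x$ is tripotent, and by the previous paragraph either case yields $x^3=x$. As $x$ was arbitrary, every element of $R$ satisfies $x^3=x$. I would stress that this reduction uses nothing about commutativity and applies to arbitrary, possibly noncommutative, rings of characteristic $3$. For the final assertion I would then invoke the standard structure theory of rings satisfying $x^3=x$: such a ring is commutative (as already noted for tripotent rings, this also follows from Jacobson's commutativity theorem) and reduced, since $x^2=0$ forces $x=x^3=x\cdot x^2=0$. A reduced commutative ring is a subdirect product of integral domains, each arising as a homomorphic image of $R$ and therefore again satisfying $x^3=x$; in a domain the factorization $x(x^2-1)=0$ leaves only $x\in\{0,1,-1\}$, so each factor is either $\Z_2$ or $\Z_3$. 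Finally, the characteristic is used a second time to discard the factor $\Z_2$: each subdirect projection sends $1_R$ to the identity of its factor, so $3\cdot 1=0$ holds there, which is false in $\Z_2$; hence every factor is $\Z_3$ and $R$ is a subdirect product of copies of $\Z_3$.

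I do not anticipate a genuine obstacle in this argument. The entire content is the collapse of the ``weakly'' condition effected by the identity $(1+x)^3=1+x^3$, and the only point requiring a little care is the characteristic-$3$ bookkeeping that pins the subdirectly irreducible factors down to $\Z_3$ rather than $\Z_2$. The subsequent structure-theoretic conclusion is routine once tripotence has been established.
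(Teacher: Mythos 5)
Your proposal is correct, and its core is exactly the paper's own argument: expand $(1+x)^3=1+3x+3x^2+x^3$, use $3=0$ to see that $1+x$ is tripotent precisely when $x$ is, so the weak hypothesis collapses and $R$ is tripotent. The only difference is that the paper stops there, treating the final assertion (subdirect product of copies of $\Z_3$) as known structure theory of tripotent rings, whereas you prove it explicitly: tripotence gives commutativity (Jacobson) and reducedness, hence a subdirect product of domains satisfying $x^3=x$, each of which is $\Z_2$ or $\Z_3$, and the characteristic-$3$ condition, which passes to every quotient, rules out $\Z_2$. That added argument is sound and makes the lemma self-contained.
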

\begin{proof}
The identity $(1+x)^3=1+x$ is equivalent to $1+3x+3x^2+x^3=1+x$ and since $3=0$ this is equivalent to $x^3=x.$
%
\end{proof}

We start with the 

\begin{prop}
The following are equivalent for a ring $R$:
\begin{enumerate}[{\rm (a)}]
 \item $R$ is weakly tripotent;
 \item for every element $x\in R$ at least one of the elements $x$ or $1-x$ is tripotent.
\end{enumerate}
\end{prop}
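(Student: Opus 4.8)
The plan is to exploit the elementary observation, already noted in the introduction, that an element $u$ is tripotent if and only if $-u$ is tripotent: since $(-u)^3 = -u^3$, the equation $(-u)^3 = -u$ holds exactly when $u^3 = u$. With this negation-symmetry of the tripotent property in hand, the whole equivalence reduces to a single substitution, replacing $x$ by $-x$ in one condition to produce the other.

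First I would prove (a) $\Rightarrow$ (b). Fix $x \in R$ and apply the weak tripotence hypothesis (a) to the element $-x$: at least one of $-x$ and $1 + (-x) = 1 - x$ is tripotent. If $1 - x$ is tripotent we are done. If instead $-x$ is tripotent, then by the negation symmetry $x$ itself is tripotent. In either case at least one of $x$ and $1 - x$ is tripotent, which is precisely condition (b).

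Conversely, for (b) $\Rightarrow$ (a) I would run the symmetric argument: fix $x$ and apply (b) to $-x$, so that at least one of $-x$ and $1 - (-x) = 1 + x$ is tripotent; using once more that $-x$ is tripotent if and only if $x$ is, we conclude that $x$ or $1 + x$ is tripotent, i.e.\ (a) holds.

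The only point that genuinely needs checking — and it is the sole, very minor, obstacle — is the negation-invariance of the tripotent property; once that is recorded, each implication is a one-line substitution. In particular no appeal to commutativity, to the characteristic decomposition of the first lemma, or to any idempotent description is required, so the argument remains purely formal and short.
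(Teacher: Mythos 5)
Your proof is correct and is essentially the same argument as the paper's: both rest on the observation that $u$ is tripotent if and only if $-u$ is (since $(-u)^3=-u^3$), and then apply the hypothesis to $-x$ -- the paper merely phrases the case analysis contrapositively (``if $x^3\neq x$ then $(-x)^3\neq -x$, hence $(1-x)^3=1-x$'') where you phrase it directly. No gap; nothing further is needed.
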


\begin{proof}
(a)$\Rightarrow$(b) Let $x\in R$ such that $x^3\neq x$. It follows that $(-x)^3\neq -x$, hence $(1-x)^3=1-x$. 

(b)$\Rightarrow$(a) is proved in a similar way. 
\end{proof}

\begin{rem}
Let us remark that a similar equivalence is not valid neither for Boolean rings nor for nil-clean conditions. More precisely:
\begin{enumerate}[{\rm (a)}]
\item a ring is Boolean (clean, respectively nil-clean) if and only if for every element $x\in R$ at least one of the elements $x$ or $1-x$ is idempotent (clean, respectively nil-clean);
\item in the ring $\Z_3$ for every element $x\in R$ at least one of the elements $x$ or $1+x$ is idempotent (resp. nil-clean), but $\Z_3$ is not Boolean or nil-clean, while $\Z_{(15)}$ (i.e. the ring of rational numbers with denominator coprime with $15$) is a weakly clean ring which is not clean, \cite{AA}.
\end{enumerate}
\end{rem}

In the following result we describe a connection between the class of weakly tripotent rings and one of the classes studied in \cite{trip}. A second connection will be established in Corollary \ref{inc-sci-wtri}.

\begin{cor}\label{first-incl}
The class of all weakly tripotent ring is strictly contained in the class of rings such that all elements are sums an idempotent and a tripotent that commute. 
\end{cor}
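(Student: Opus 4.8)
The plan is to prove the two assertions separately: first the inclusion, then its strictness.

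For the inclusion, I would take an arbitrary element $x$ of a weakly tripotent ring $R$ and invoke the preceding Proposition, which guarantees that at least one of $x$ or $1-x$ is tripotent. If $x$ itself is tripotent, then $x=0+x$ already exhibits $x$ as a sum of the central idempotent $0$ and the tripotent $x$. If instead $1-x$ is tripotent, I would use the fact that the negative of a tripotent is again tripotent, since $(-a)^3=-a^3=-a$ (a fact already exploited in the proof of the Proposition); thus $t:=x-1=-(1-x)$ is tripotent, and $x=1+t$ writes $x$ as the sum of the idempotent $1$ and the tripotent $t$, which commute because $1$ is central. Hence every element of $R$ is a sum of a commuting idempotent and tripotent, which gives the desired inclusion.

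For strictness, the key observation is that the larger class is closed under finite direct products, whereas the class of weakly tripotent rings is not (as already noted in the Introduction). Indeed, if $a=e+s$ and $b=f+t$ are decompositions into commuting sums of an idempotent and a tripotent in two rings, then $(a,b)=(e,f)+(s,t)$ is such a decomposition coordinatewise. I would therefore take $\Z_4$, which is weakly tripotent (its elements $0,1,3$ are tripotent, and for $2$ the element $1+2=3$ is tripotent), so that $\Z_4$ lies in the larger class by the inclusion just proved. Then $\Z_4\times\Z_4$, being a product of members of the larger class, again lies in it, while it fails to be weakly tripotent: I would witness this with the element $(1,2)$, for which $(1,2)^3=(1,0)\neq(1,2)$ and $(1,1)+(1,2)=(2,3)$ satisfies $(2,3)^3=(0,3)\neq(2,3)$. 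Thus neither $(1,2)$ nor $1+(1,2)$ is tripotent, so $\Z_4\times\Z_4$ is in the larger class but not weakly tripotent, establishing that the inclusion is strict.

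The inclusion is entirely routine once the Proposition is available, so the only real content lies in producing a concrete separating ring. The guiding principle, namely that the larger class is product-closed while weak tripotence is not, makes $\Z_4\times\Z_4$ essentially forced; the one substantive (if minor) step is checking that the single element $(1,2)$ defeats both tripotence tests, which I expect to be the main, albeit small, obstacle.
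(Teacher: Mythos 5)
Your proof is correct and takes essentially the same route as the paper's: the inclusion by writing a non-tripotent $x$ as $1+(x-1)$ with $x-1$ tripotent, and strictness via the very same example $\Z_4\times\Z_4$ with the same witness element $(1,2)$. The only difference is that you justify in detail (product closure of the larger class plus the weak tripotence of $\Z_4$) why every element of $\Z_4\times\Z_4$ is a sum of a commuting idempotent and tripotent, a verification the paper states as a bare observation.
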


\begin{proof}
The first statement is obvious since for every element $x$ of a weakly tripotent ring such that $x^3\neq x$ we have that $x-1$ is tripotent. To produce an example, we observe that in the ring $\Z_4\times \Z_4$ every element is a sum of a idempotent and a tripotent, but $x=(1,2)$ and  $(1,1)+x$ are not tripotent elements.   
\end{proof}

It is easy to see that the class of weakly tripotent rings is contained in the class studied in \cite[Section 2]{trip} of those rings such that every element is a sum of two commuting tripotents.

The proof of the following proposition is easy.

\begin{prop}\label{subdir-prod}
If a ring $R$ is weakly tripotent then it can be embedded as a subring of a direct product of subdirectly irreducible weakly tripotents rings.
\end{prop}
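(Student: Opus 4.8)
The plan is to reduce the statement to Birkhoff's subdirect representation theorem, which holds for arbitrary algebraic structures and in particular for associative rings. That theorem asserts that every ring $R$ is isomorphic to a subdirect product of subdirectly irreducible rings, and---this is the feature I intend to exploit---each factor in such a representation arises as a homomorphic image of $R$.

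First I would recall the precise form of the theorem. For a ring $R$ one produces a family of ideals $\{I_j\}_{j\in J}$ with $\bigcap_{j\in J} I_j = 0$ such that every quotient $R/I_j$ is subdirectly irreducible; the diagonal map $R \to \prod_{j\in J} R/I_j$ is then injective (because the ideals intersect in $0$) and surjects onto each factor, so it realizes $R$ as a subdirect product of the $R/I_j$. In particular $R$ is embedded as a subring of the direct product $\prod_{j\in J} R/I_j$.

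The only thing left to verify is that the factors remain inside the class. Here I would invoke item (2) of the first lemma of this section: every homomorphic image of a weakly tripotent ring is again weakly tripotent. Since each $R/I_j$ is a homomorphic image of the weakly tripotent ring $R$, every $R/I_j$ is a subdirectly irreducible weakly tripotent ring. Combining the two observations gives exactly the claim.

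The one subtlety worth flagging---and the reason the argument is not simply a citation of the subdirect representation theorem \emph{for varieties}---is that weakly tripotent rings are defined by a disjunctive condition (``$x$ or $1+x$ is tripotent'') rather than by identities, so they need not form a variety. Birkhoff's theorem for a single algebra, however, makes no such assumption, and the closure under homomorphic images supplied by the lemma is precisely what compensates for the absence of an equational description. I do not expect any genuine obstacle beyond keeping these two ingredients---the general subdirect decomposition and the hereditary behaviour of the property under quotients---clearly separated.
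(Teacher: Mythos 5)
Your proposal is correct and is exactly the argument the paper has in mind: the paper omits the proof as ``easy,'' but its intended route is precisely Birkhoff's subdirect representation theorem (cited later as \cite[Theorem 12.3]{Lam-first}) combined with part (2) of the first lemma of the section, that homomorphic images of weakly tripotent rings are weakly tripotent. Your observation that the class is not a variety but that this is irrelevant for the single-algebra form of Birkhoff's theorem is accurate and well placed.
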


However, in the case of characteristic $2^k$ a direct product of weakly tripotent ring is not necessarily weakly tripotent. The proof of the following proposition is standard. We refer to the similar property stated for weakly nil-clean rings in \cite[Theorem 1.7]{AA}

\begin{prop}\label{product-w}
 A direct product $\prod_{i\in I}R_i$ of weakly tripotent rings is weakly tripotent if and only if there exists $i_0\in I$ such that $R_{i_0}$ is weakly tripotent and for all $i\in I\setminus \{i_0\}$ the rings $R_i$ are tripotent.
\end{prop}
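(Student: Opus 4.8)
The plan is to prove the two implications separately, handling the forward direction by contradiction via an explicit construction and the backward direction by a coordinatewise check; throughout I use that tripotence ($y^3=y$) in a product holds if and only if it holds in every coordinate.

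For the sufficiency direction I would argue coordinatewise. Fix $x=(x_i)_{i\in I}$ in the product and assume $R_{i_0}$ is weakly tripotent while $R_i$ is tripotent for every $i\neq i_0$. For each such $i$ both $x_i$ and $1+x_i$ satisfy $y^3=y$, so these coordinates never obstruct tripotence of either $x$ or $1+x$. At the distinguished index, weak tripotence of $R_{i_0}$ gives that $x_{i_0}$ or $1+x_{i_0}$ is tripotent. In the first case every coordinate of $x$ is tripotent, whence $x^3=x$; in the second case every coordinate of $1+x$ is tripotent, whence $(1+x)^3=1+x$. Thus the product is weakly tripotent. The same reasoning covers the case in which every $R_i$ is tripotent, since then any index may serve as $i_0$.

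For the necessity direction I would show that at most one factor can fail to be tripotent. Suppose toward a contradiction that two distinct indices $i_1,i_2$ have $R_{i_1}$ and $R_{i_2}$ non-tripotent, and choose non-tripotent elements $a_1\in R_{i_1}$ and $a_2\in R_{i_2}$, so $a_1^3\neq a_1$ and $a_2^3\neq a_2$. Define $x\in\prod_{i\in I}R_i$ by $x_{i_1}=a_1$, $x_{i_2}=a_2-1$, and $x_i=0$ otherwise. Then the $i_1$-coordinate of $x$ is not tripotent, so $x^3\neq x$, while the $i_2$-coordinate of $1+x$ equals $a_2$ and hence is not tripotent, so $(1+x)^3\neq 1+x$. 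Thus neither $x$ nor $1+x$ is tripotent, contradicting weak tripotence of the product.

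Consequently the set of non-tripotent factors has at most one element. If it is empty, the product is tripotent and any index serves as $i_0$; if it is a singleton $\{i_0\}$, then $R_{i_0}$ is weakly tripotent by hypothesis and every other $R_i$ is tripotent, which is precisely the asserted form. The only genuinely delicate point is the construction of the witnessing element $x$ in the necessity part: the whole content is the observation that a non-tripotent coordinate of $x$ placed at $i_1$ and a non-tripotent coordinate of $1+x$ placed at $i_2$ simultaneously spoil both candidates, which forces the non-tripotent factors to coincide.
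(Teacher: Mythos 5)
Your proof is correct and is exactly the standard argument the paper has in mind: the paper omits the proof of Proposition~\ref{product-w} entirely, calling it standard and pointing to the analogous statement for weakly nil-clean rings in \cite[Theorem 1.7]{AA}. Your coordinatewise check for sufficiency and, for necessity, the witness $x$ with $x_{i_1}=a_1$ non-tripotent and $x_{i_2}=a_2-1$ (so that $1+x$ fails at $i_2$) is precisely that standard proof, so nothing is missing.
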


The following result was proved in \cite[Theorem 3.6]{trip} by using the fact that if every element of a ring $R$ is a sum of an idempotent and a tripotent that commute then $R$ is strongly nil-clean, hence $R/J(R)$ is Boolean and $J(R)$ is nil, \cite[Theorem 5.6]{wnc}. We present for reader's convenience a proof which uses the general procedure described in \cite[Section 12]{Lam-first}.

\begin{thm}\cite[Theorem 3.6]{trip}\label{th-sitc}
Let $R$ be a ring of characteristic $2^k$. The following are equivalent:
\begin{enumerate}[{\rm (1)}]
 \item every element of $R$ is a sum of an idempotent and a tripotent that commute;
 \item $x^4=x^6$ for every $x\in R$;
 \item $R/J(R)$ is Boolean and $U(R)$ is a group of exponent $2$;
 \item $R/J(R)$ is Boolean, and for every $x\in J(R)$ we have $x^2=2x$.
\end{enumerate}

Consequently, if $R$ satisfies the above conditions then $J(R)$ coincides to the set of all nilpotent elements of $R$. 
\end{thm}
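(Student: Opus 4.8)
The plan is to route every condition through (4), via the cycle (1)$\Rightarrow$(4), (2)$\Rightarrow$(3)$\Rightarrow$(4), together with (4)$\Rightarrow$(1) and (4)$\Rightarrow$(2). Two elementary facts drawn from the clause ``$x^2=2x$ for all $x\in J(R)$'' will be used repeatedly. First, it gives $x^n=2^{n-1}x$ by induction, so $x^{k+1}=2^kx=0$ and $J(R)$ is nil. Second, applying the same relation to $2x\in J(R)$ yields $(2x)^2=2(2x)$, that is $4x^2=4x$, and substituting $x^2=2x$ gives $8x=4x$, so $4J(R)=0$. A third observation drives the direction (1)$\Rightarrow$(4): for $x=e+t$ with $e^2=e$, $t^3=t$, $et=te$, decompose $1$ into the orthogonal idempotents built from $e$ and the idempotent $t^2$; on each Peirce component $x$ equals $0$, $1$, an involution $\tau$, or $1+\tau$ with $\tau^2=1$. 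In the first two cases a single factor of $x(x-1)(x+1)(x-2)$ vanishes, while in the latter two the commuting products $(x-1)(x+1)$, respectively $x(x-2)$, both equal $\tau^2-1=0$. Hence (1) forces the polynomial identity $x(x-1)(x+1)(x-2)=0$ on all of $R$.

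The substantial step, used to prove both (1)$\Rightarrow$(4) and (2)$\Rightarrow$(3), is that $R/J(R)$ is Boolean, and here I follow the general procedure of \cite[Section 12]{Lam-first}. The quotient $\bar R=R/J(R)$ is semiprimitive and inherits the relevant identity (either $x(x-1)(x+1)(x-2)=0$ under (1), or $x^4=x^6$ under (2)), so it is a subdirect product of primitive rings, each satisfying the same identity. By Kaplansky's theorem a primitive ring satisfying a polynomial identity is a matrix ring $M_n(D)$ over a division ring $D$, and since $2^k=0$ the characteristic of $D$ is $2$. Evaluating the identity on $\mathrm{diag}(a,0,\dots,0)$ forces every $a\in D$ to satisfy $a^2(a+1)^2=0$ (under (1)) or $a^4(a^2+1)=0$ (under (2)); as $D$ has no zero divisors this gives $a\in\{0,1\}$, i.e. $D=\mathbb{F}_2$. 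Finally $n=1$, because for $n\ge2$ a $2\times2$ matrix of multiplicative order $3$ over $\mathbb{F}_2$ (placed in a block, zero elsewhere) violates both identities. Hence every primitive quotient is $\mathbb{F}_2$ and $\bar R$ is Boolean. I expect this primitive-ring analysis to be the main obstacle.

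The remaining implications are routine. For (1)$\Rightarrow$(4) it now suffices to note that if $x\in J(R)$ then $x-1$ and $x+1$ are units, and since all factors are polynomials in $x$ and therefore commute, the identity $x(x-1)(x+1)(x-2)=0$ collapses (after multiplying by $(x^2-1)^{-1}$) to $x(x-2)=0$, i.e. $x^2=2x$; with $\bar R$ Boolean this is (4). For (2)$\Rightarrow$(3) the exponent-two claim is immediate, since $u^4=u^6$ multiplied by $u^{-4}$ gives $u^2=1$, and $\bar R$ Boolean was just shown. For (3)$\Rightarrow$(4): if $x\in J(R)$ then $1-x\in U(R)$, so $(1-x)^2=1$ gives $x^2=2x$. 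For (4)$\Rightarrow$(1): since $\bar R$ is Boolean the image of $x$ is idempotent, and as $J(R)$ is nil I may lift it to an idempotent $e\in\Z[x]$, so $e$ commutes with $x$ and $n:=x-e\in J(R)$; setting $f:=1-e$ and $s:=x-f=2e+n-1$, the relations $e^2=e$, $n^2=2n$ and $4J(R)=0$ give $s^2=1+4en=1$, so $s$ is a tripotent commuting with the idempotent $f$ and $x=f+s$. For (4)$\Rightarrow$(2): Peirce-decomposing by $e$, the component of $x$ in $eRe$ squares to $e$ (using $4en=0$), while its component in $(1-e)R(1-e)$ lies in $J(R)$ and has cube $4x=0$; hence $x^4=x^6$ on each corner and thus on $R$.

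Assembling these, (1)$\Leftrightarrow$(4) and (2)$\Leftrightarrow$(3)$\Leftrightarrow$(4), so all four conditions are equivalent. For the final consequence: $J(R)$ is nil by the first paragraph, hence contained in the set of nilpotent elements; conversely $R/J(R)$ is Boolean, hence reduced, so every nilpotent element of $R$ maps to $0$ in $R/J(R)$ and therefore lies in $J(R)$. Thus $J(R)$ coincides with the set of all nilpotent elements of $R$.
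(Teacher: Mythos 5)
Your proof is correct, and it is genuinely more self-contained than the paper's. The paper only proves two links in detail: it cites \cite[Theorem 3.6]{trip} for (1)$\Rightarrow$(2), and proves (2)$\Rightarrow$(3) by embedding $R/J(R)$ into a product of left primitive factor rings and ruling out $M_n(K)$, $n\geq 2$, via an explicit matrix with $A^4\neq A^6$; the remaining implications and the final consequence are left to the cited source. You instead prove every implication: your derivation of the quartic identity $x(x-1)(x+1)(x-2)=0$ from a decomposition $x=e+t$ (splitting $1$ by the commuting idempotents $e$ and $t^2$ and checking the four cases $0$, $1$, $\tau$, $1+\tau$) replaces the citation for condition (1); your explicit lift $x=f+s$ with $s=2e+n-1$ and $s^2=1+4en=1$ supplies (4)$\Rightarrow$(1); and you add (4)$\Rightarrow$(2) and the identification of $J(R)$ with the nilpotents. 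The one structural divergence in the hard step is that where the paper stays elementary---using \cite[Theorem 11.19]{Lam-first} to show a non-artinian primitive ring would have a subring surjecting onto $M_2(K)$---you invoke Kaplansky's PI theorem to conclude each primitive quotient is a matrix ring $M_n(D)$ finite-dimensional over its center; both arguments then finish the same way, evaluating the identity on $\mathrm{diag}(a,0,\dots,0)$ and on an order-$3$ block to force $D\cong\Z_2$ and $n=1$. Kaplansky's theorem does apply here, since your identities are monic one-variable polynomials and hence proper polynomial identities, so this is legitimate, just a heavier tool than the paper's hands-on density argument. What your route buys is independence from \cite{trip}; what the paper's buys is brevity and avoidance of PI theory.
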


\begin{proof}
(1)$\Rightarrow$(2) This is the implication (2)$\Rightarrow$(3) from \cite[Theorem 3.6]{trip}. 

(2)$\Rightarrow$(3) First, we observe that for every ring $R$ which satisfies (2) and every $x\in U(R)$ we have $x^2=1$.

The class of rings which satisfies (2) is closed with respect to subrings and factor rings.  More, precisely, we embed 
$R/J(R)$ as a subring of a direct product of left primitive rings which are factor rings of $R$, \cite[Theorem 12.5]{Lam-first}. We will prove that every left primitive ring which satisfies (2) is isomorphic to a division ring. 

In order to prove this, we first observe that if $K$ is a division ring of characteristic $2$ then for every $n\geq 2$, for the $n\times n$ matrix $A=\left(\begin{array}{ccccc}
1 & 1 & 0 &\ldots & 0 \\
1 & 0 & 0 &\ldots & 0\\
0 & 0 &  0 &\ldots & 0\\
\vdots & \vdots &  \vdots &\cdots & \vdots \\
0 & 0 &  0 &\ldots & 0 
 \end{array}\right)$ with coefficients in $K$ we have $A^4\neq A^6$. Then for every $n\geq 2$ and every division ring $K$ of characteristic $2$, the rings $M_n(K)$ of $n\times n$ matrices over $K$ do not satisfy (2).

Suppose that $R$ is left primitive and satisfies (2). If $R$ is not left artinian then we use \cite[Theorem 11.19]{Lam-first} to conclude that there exists a subring $S$ of $R$ and a division ring (of characteristic $2$) such that there exists an onto ring homomorphism from $S$ to the ring of $M_2(K)$, which is not possible. Therefore there exists a positive integer $n$ and a division ring $K$ such that $R\cong M_n(K)$. This is possible only if $n=1$, hence $K$ is a division ring of characteristic $2$. Moreover, for every $x\in K$ we have $x^2=1$, which is possible only if $K\cong \Z_2$ (if we have a nonzero element $x\neq 1$ in $K$ then the subfield generated by $x$ has $4$ elements, and it follows that $x^2\neq 1$).

Coming back to the general case, it follows that for every ring $R$ which satisfies (2) then ring $R/J(R)$ is Boolean, since it is a subring of a direct product of copies of $\Z_2$. 
\end{proof}

\begin{cor}\label{cor-prop-sset}
If every element of $R$ is a sum of an idempotent and a tripotent that commute then 
\begin{enumerate}[{\rm (i)}]
 \item the ideal $J(R)$ is nil and it contains all nilpotent elements of $R$;
  \item for every $x\in J(R)$ we have $x^2=2x$, $4x=0$, and $x^3=0$ . 
\item $xy=yx$ for all $x,y\in J(R)$;
  \item $2J(R)^2=0$.
  \end{enumerate}
\end{cor}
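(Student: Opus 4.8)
The plan is to use \thmref{th-sitc} as the main engine, so the first task is to check that its hypothesis (characteristic $2^k$) is automatic under the assumption of the corollary. By the strongly nil-clean structure recalled just before \thmref{th-sitc} (see \cite[Theorem 5.6]{wnc}), the hypothesis already forces $R/J(R)$ to be Boolean and $J(R)$ to be nil. Since a Boolean ring has characteristic $2$, the element $2$ lies in $J(R)$; being an element of a nil ideal it is nilpotent, so $2^m=0$ for some $m$ and $R$ has characteristic a power of $2$. Now \thmref{th-sitc} applies and supplies the whole toolkit: $R/J(R)$ is Boolean, $U(R)$ is a group of exponent $2$, and $x^2=2x$ for every $x\in J(R)$.

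Part (i) is then immediate: $J(R)$ is nil by the above, and if $n$ is nilpotent then its image in the reduced ring $R/J(R)$ vanishes, so $n\in J(R)$; thus $J(R)$ is exactly the set of nilpotent elements, which is the concluding remark of \thmref{th-sitc}. For part (ii) I would start from $x^2=2x$ and plug in the element $2x\in J(R)$: this gives $(2x)^2=2(2x)$, i.e.\ $4x^2=4x$, and since $x^2=2x$ the left-hand side equals $8x$, whence $4x=0$. Feeding this back, $x^3=x\cdot x^2=2x^2=4x=0$, which settles (ii).

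The crux is the commutativity in (iii), and I would obtain it from the exponent-$2$ condition on $U(R)$ rather than by manipulating $x^2=2x$ directly. For $x\in J(R)$ the identity $x^2=2x$ rewrites as $(x-1)^2=1$, so each $x-1$ is a unit of order dividing $2$. A group of exponent $2$ is abelian, hence for $x,y\in J(R)$ the units $x-1$ and $y-1$ commute; expanding $(x-1)(y-1)=(y-1)(x-1)$ and cancelling the identical linear terms leaves precisely $xy=yx$. With (iii) in hand, part (iv) drops out: applying $z^2=2z$ to $z=x+y\in J(R)$ and subtracting $x^2=2x$ and $y^2=2y$ yields $xy+yx=0$, and commutativity converts this into $2xy=0$, that is $2J(R)^2=0$.

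The only genuinely delicate point is (iii): expanding $(x+y)^2=2(x+y)$ on its own yields merely the anticommutation relation $xy=-yx$, which is not enough to force commutativity, so one really does need the structural input that $U(R)$ has exponent $2$ (equivalently, that $1+J(R)$ is an abelian group under multiplication). Everything else is routine bookkeeping with the identities already secured.
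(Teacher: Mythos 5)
The body of your argument coincides with the paper's own proof. For (iii) the paper likewise expands $(1+x)(1+y)=(1+y)(1+x)$, using that $U(R)$ is abelian because it has exponent $2$; for (iv) it likewise expands $(x+y)^2=2(x+y)$ to get $xy+yx=0$ and combines this with (iii). Your route to (ii) is a small, correct variant worth noting: the paper deduces $x^2=2x=-2x$ from $(1\pm x)^2=1$ and hence $0=4x=x^3$, whereas you obtain $4x=0$ by substituting $2x$ into the identity $y^2=2y$ valid on $J(R)$; either computation works.

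The one unsound step is your preliminary reduction to characteristic $2^k$. It is not true that the hypothesis of the corollary by itself forces $R/J(R)$ to be Boolean (equivalently, that $R$ is strongly nil-clean): the ring $\Z_3$ satisfies the hypothesis, since $2=0+2$ with $2$ tripotent, yet $\Z_3$ is not strongly nil-clean, $R/J(R)=\Z_3$ is not Boolean, and the characteristic is $3$; the same happens for $\Z_{12}\cong\Z_4\times\Z_3$, where your conclusion ``$2\in J(R)$'' fails outright. The sentence you quote from the paragraph preceding \thmref{th-sitc} is accurate only within the characteristic-$2^k$ setting of that theorem (this is also why \corref{irreducible} has to add the hypothesis $3\in U(R)$), so it cannot be used to manufacture the characteristic assumption. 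The repair is either to read the corollary with the standing characteristic-$2^k$ hypothesis of \thmref{th-sitc} --- evidently how the paper reads it, since its own proof also invokes $(1\pm x)^2=1$ without comment --- or, for an arbitrary $R$, to first split off the characteristic-$3$ part via the full form of \cite[Theorem 3.6]{trip}: $R\cong R_1\times R_2$ with $R_1$ of characteristic $2^k$ and $R_2$ tripotent of characteristic $3$. Since a tripotent ring has zero Jacobson radical and its units square to $1$, one gets $J(R)=J(R_1)\times 0$ and $U(R)$ of exponent $2$, so all four statements reduce to the factor $R_1$, where your computations for (i)--(iv) apply verbatim.
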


\begin{proof}
(i) is obvious.

(ii) For every $x\in J(R)$ the elements $1\pm x$ are invertible. Hence $(1\pm x)^2=1$, and it follows that $x^2=2x=-2x$. Hence $0=4x=x^3$.

(iii) Since $1+x$ and $1+y$ are invertible, and the group $U(R)$ is commutative, we have $(1+x)(1+y)=(1+y)(1+x)$, hence $xy=yx$.

(iv) Observe that $2(x+y)=(x+y)^2=x^2+xy+yx+y^2=2x+2y+xy+yx$, hence $xy=-yx$. Therefore, $2xy=0$.
\end{proof}

In the following corollary we have a characterization of weakly tripotent rings without nontrivial idempotents. This will be useful in order to describe commutative weakly tripotent rings, since in this case every subdirectly irreducible ring is without nontrivial idempotents. 

\begin{cor}\label{irreducible}
The following are equivalent for a  ring $R$ without nontrivial idempotents such that $3\in U(R)$:
\begin{enumerate}[{\rm (1)}]
 \item every element of $R$ is a sum of an idempotent and a tripotent that commute;
 \item $R$ is local  such that $R/J(R)\cong \mathbb{Z}_2$, $x^2=1$ for all $x\in U(R)$;
 \item  $R/J(R)\cong \Z_2$, for every $x\in J(R)$ we have $x^2=2x$;
 \item $R$ is weakly tripotent.
 \end{enumerate}

 Consequently, every weakly tripotent ring without nontrivial idempotents is commutative. 
 \end{cor}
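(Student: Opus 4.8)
The plan is to exploit that a ring without nontrivial idempotents has only $0$ and $1$ as idempotents, which collapses condition (1) to a purely tripotent statement. First I would observe that if every element $x$ is a sum $e+t$ of a commuting idempotent $e$ and a tripotent $t$, then $e\in\{0,1\}$, so either $x=t$ is tripotent or $x-1=t$ is tripotent. Since tripotents are closed under negation, ``$x-1$ tripotent'' is equivalent to ``$1-x$ tripotent'', and the preceding Proposition identifies the latter alternative with weak tripotence. This yields (1)$\Leftrightarrow$(4) at once, the converse reading being that weak tripotence produces the decomposition $x=0+x$ or $x=1+(x-1)$.

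Next I would pin down the characteristic so that \thmref{th-sitc} becomes available. Having (4), the ring is weakly tripotent, so $24=0$ by the basic lemma; since $3\in U(R)$ we may cancel to get $8=0$, so $R$ has characteristic $2^k$. Now \thmref{th-sitc} applies and gives that $R/J(R)$ is Boolean, $U(R)$ has exponent $2$, and $J(R)$ is nil (\corref{cor-prop-sset}). Because idempotents lift modulo the nil ideal $J(R)$ and $R$ has no nontrivial idempotents, $R/J(R)$ has no nontrivial idempotents; a Boolean ring with this property is $\Z_2$, so $R/J(R)\cong\Z_2$ and $R$ is local. Together with the exponent-$2$ condition on units this is precisely (2).

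To close the equivalences I would run the short computational cycle (2)$\Rightarrow$(3)$\Rightarrow$(4). For (2)$\Rightarrow$(3), each $x\in J(R)$ makes $1-x$ a unit, whence $(1-x)^2=1$ gives $x^2=2x$. For (3)$\Rightarrow$(4), the isomorphism $R/J(R)\cong\Z_2$ makes $R$ local and forces every unit to have the form $u=1+j$ with $j\in J(R)$; feeding $x=2j$ into $x^2=2x$ and using $j^2=2j$ yields $8j=4j$, so $4j=0$, whence $u^2=(1+j)^2=1+2j+j^2=1+4j=1$ and $u$ is tripotent, while any $x\in J(R)$ has $1+x$ a (tripotent) unit; thus $R$ is weakly tripotent. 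Combined with (1)$\Leftrightarrow$(4) this establishes all four equivalences.

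Finally, for the ``consequently'' clause I would drop the hypothesis $3\in U(R)$ and argue by reduction. A weakly tripotent $R$ satisfies $24=0$, so it decomposes as $R_1\times R_2$ with $8R_1=3R_2=0$; the absence of nontrivial idempotents forces one factor to vanish. In the characteristic-$3$ case the earlier lemma makes $R$ tripotent, hence commutative; in the characteristic-$2^k$ case $3$ is automatically a unit (being coprime to $2$), so (2) holds. Commutativity then follows because $J(R)$ is commutative (\corref{cor-prop-sset}) and every element lies in $J(R)$ or in $1+J(R)$, so all products reduce to products inside $J(R)$. I expect the main obstacle to be the bookkeeping around the characteristic: one must first secure characteristic $2^k$ before \thmref{th-sitc} can be invoked, and one must separately dispose of the stray characteristic-$3$ component in the commutativity statement.
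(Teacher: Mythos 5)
Your proof is correct, and its core runs on the same machinery as the paper's: Theorem \ref{th-sitc} together with lifting of idempotents modulo the nil ideal $J(R)$ to obtain (2), the unit computation $(1-x)^2=1\Rightarrow x^2=2x$ for (2)$\Rightarrow$(3), a direct computation for (3)$\Rightarrow$(4), and the observation of Corollary \ref{first-incl} for returning to (1). The genuine differences are organizational, and they buy some extra rigor. You prove (1)$\Leftrightarrow$(4) directly and elementarily: with only trivial idempotents, a decomposition $x=e+t$ forces $e\in\{0,1\}$, so (1) says exactly that $x$ or $x-1$ is tripotent, which is weak tripotence because tripotents are closed under negation; in the paper this equivalence is only available by going around the cycle (1)$\Rightarrow$(2)$\Rightarrow$(3)$\Rightarrow$(4), i.e.\ through Theorem \ref{th-sitc}. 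You also make explicit two points the paper glosses over: first, that Theorem \ref{th-sitc} requires characteristic $2^k$, which you secure from $24=0$ and $3\in U(R)$ before invoking it; second, that the concluding commutativity statement drops the hypothesis $3\in U(R)$, which you handle by splitting off the characteristic-$3$ factor (where weak tripotence already forces tripotence, hence commutativity) and noting that $3$ is automatically invertible in characteristic $2^k$ --- the paper leaves this reduction implicit, simply citing Corollary \ref{cor-prop-sset}. Finally, your (3)$\Rightarrow$(4) (deriving $4j=0$ from applying $x^2=2x$ to $x=2j$, then $u^2=1$ for units $u=1+j$) is slightly more roundabout than the paper's one-liner --- for $x\notin J(R)$ one has $1+x\in J(R)$, so $(1+x)^2=2(1+x)$ expands directly to $x^2=1$ --- but both computations are valid.
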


\begin{proof}
(1)$\Rightarrow$(2) This follows from Theorem \ref{th-sitc} since every idempotent of $R/J(R)$ can be lifted to an idempotent of $R$.

(2)$\Rightarrow$(3) This can be proved as in Corollary \ref{cor-prop-sset}(ii).

(3)$\Rightarrow$(4) Let $x\in R\setminus J(R)$. Then $1+x\in J(R)$, and it follows that $(1+x)^2=2(1+x)$. This implies that $x^2=1$. If $x\in J(R)$ then $1+x\notin J(R)$, hence $(1+x)^2=1$, hence $1+x$ is tripotent.

For the last statement, it is enough to observe that every element of $R\setminus J(R)$ is of the form $1+x$ with $x\in J(R)$, and to apply Corollary \ref{cor-prop-sset}.

(4)$\Rightarrow$(1) See Corollary \ref{first-incl}. 
%
%
%
\end{proof}

We recall that a ring is \textsl{abelian} if every idempotent of $R$ is central. 

\begin{cor}\label{abelian}
Every abelian ring such that every element of $R$ is a sum of an idempotent and a tripotent that commute is commutative. In particular, every abelian weakly tripotent ring is commutative.
\end{cor}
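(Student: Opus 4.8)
The plan is to reduce commutativity to a statement about tripotents and then treat the characteristic-$2^k$ and characteristic-$3$ parts separately. The first assertion contains the second, since a weakly tripotent ring satisfies the hypothesis by \corref{first-incl}, so it suffices to treat a ring $R$ that is abelian and in which every element is a sum of an idempotent and a commuting tripotent. Because $R$ is abelian every idempotent is central, so each $x\in R$ can be written as $x=e+t$ with $e$ a central idempotent and $t$ tripotent. A direct expansion shows that for $x=e+t$ and $y=f+s$, with $e,f$ central idempotents and $t,s$ tripotent, one has $xy-yx=ts-st$. Hence $R$ is commutative if and only if any two tripotents of $R$ commute, and this is the statement I would aim to establish.

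Next I would split off the characteristic-$3$ part. Using that $J(R)$ is nil (\corref{cor-prop-sset}) one lifts the idempotent $\overline 3\in R/J(R)$ to a central idempotent, obtaining a decomposition $R=A\times B$ in which $2$ is nilpotent in $A$ (so $A$ has characteristic $2^k$) and $2$ is invertible in $B$. On the factor $B$ the classical device applies: for a tripotent $t$ the elements $p=2^{-1}(t^2+t)$ and $q=2^{-1}(t^2-t)$ are orthogonal idempotents with $t=p-q$, so $t$ is a difference of central idempotents and is therefore itself central. Thus all tripotents of $B$ are central, they commute, and $B$ is commutative by the reduction of the first paragraph.

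On the factor $A$, of characteristic $2^k$, I would use the units. For a tripotent $t$ put $v_t=1-t^2+t$; since $t^2$ is a central idempotent, $v_t$ is a unit with $v_t^2=1$, and a short computation gives $v_tv_s-v_sv_t=ts-st$ for tripotents $t,s$. By \thmref{th-sitc} the group $U(A)$ has exponent $2$, hence is abelian, so $v_t$ and $v_s$ commute and therefore $ts=st$. Consequently $A$ is commutative, and $R=A\times B$ is commutative as well.

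The step I expect to be the main obstacle is producing the decomposition $R=A\times B$, that is, showing that $\overline 3$ is an idempotent of $R/J(R)$ — equivalently, that $R/J(R)$ is a subdirect product of copies of $\Z_2$ and $\Z_3$. In the characteristic-$2^k$ situation to which \thmref{th-sitc} applies this is exactly the Boolean conclusion, but in the mixed-characteristic case it requires extending the primitive-image analysis in the proof of \thmref{th-sitc}, where the primitive factors were identified with $\Z_2$, so as to also allow the prime field $\Z_3$; granting this, the idempotent lifts because $J(R)$ is nil. For the weakly tripotent case the decomposition is already available from the identity $24=0$, so there only the argument on $A$ is needed.
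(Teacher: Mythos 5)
Your route is genuinely different from the paper's, and most of it is sound. The paper disposes of the corollary in two lines: by \propref{subdir-prod} an abelian weakly tripotent ring is a subdirect product of subdirectly irreducible abelian weakly tripotent rings; each of these, being abelian and subdirectly irreducible, has no nontrivial idempotents, hence is commutative by \corref{irreducible}, and a subring of a product of commutative rings is commutative. Your argument instead splits the ring by characteristic and works globally: the reduction of commutativity to the identity $ts=st$ for tripotents (via $xy-yx=ts-st$ when the idempotents are central), the treatment of the $2$-invertible factor by writing $t=p-q$ with $p,q$ central idempotents, and the characteristic-$2^k$ argument using the involutions $v_t=1-t^2+t$ together with the exponent-$2$ unit group from \thmref{th-sitc}(3) are all correct. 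In particular, your proof of the second assertion (abelian weakly tripotent implies commutative), where $24=0$ supplies the splitting $R=A\times B$, is complete and more explicit than the paper's.

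For the first (main) assertion, however, there is a genuine gap, which you flag yourself: you never produce the decomposition $R=A\times B$ for a general abelian ring in which every element is a sum of an idempotent and a commuting tripotent; you only ``grant'' that $\overline{3}$ is an idempotent of $R/J(R)$, deferring to an unproved extension of the primitive-image analysis behind \thmref{th-sitc}. As written, the main statement is proved only conditionally. The gap is real but can be closed by an elementary computation, with no primitive-ring theory at all: write $3=e+t$ with $e$ idempotent, $t$ tripotent, $et=te$. Then $t=3-e$ and $t^3=t$ gives $27-19e=3-e$, i.e.\ $24=18e$; multiplying by $e$ yields $6e=0$, hence $18e=0$ and so $24=0$ holds in every ring of this class. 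The Chinese Remainder Theorem then gives $R\cong R/8R\times R/3R$, which is exactly the splitting you need, and both factors are again abelian and inherit the hypothesis, being direct factors. With that supplement your argument becomes a complete proof, independent of the paper's subdirect-irreducibility route.
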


\begin{proof}
If we have an abelian weakly tripotent ring, we embed the ring as a subdirect product of a family of subdirectly irreducible abelian weakly tripotent rings (Proposition \ref{subdir-prod}). Since every subdirectly irreducible abelian ring has no nontrivial idempotents, the conclusion follows from Corollary \ref{irreducible}.  
\end{proof}


But, let us remark that there exists non-commutative subdirectly irreducible weakly tripotent rings which are not local rings:

\begin{exa}
The ring $T_2(\Z_2)$ of upper triangular matrices over $\Z_2$ is subdirectly irreducible and weakly tripotent.   
\end{exa}

The above corollary allows us to provide nontrivial examples:

\begin{exa}\label{exemple-fi}{\rm 
(1) If $N$ is an abelian group such that $2^kN=0$, with $k\leq 3$, then the idealization of $N$ by $\Z_{2^k}$, i.e. the ring af all matrices $\left(\begin{array}{cc}
x & n\\
0&x                                                                                                                                                                                                             \end{array}\right),$
is weakly tripotent if and only if $2N=0$.

(2) Let $2\leq k\leq 3$ be a positive integer. If  $N=\Z_4$ then the multiplication defined on the abelian group $R=\Z_{2^k}\times N$  by 
$(x,n)\ast (y,m)=(xy, xm+ny+2mn)$ induces a structure of weakly tripotent ring on $\Z_4\times Z_4$.

(3) The ring $R=\Z_4[X,Y]/(X^2,Y^2,XY-2,2X+2Y)$ is a subdirectly irreducible weakly tripotent ring. In order to prove that $R$ is subdirectly irreducible, we observe that if $I$ is an ideal in $R$ such that $X$ or $Y$ is in $I$ then $2\in I$. Moreover, if $2+X\in I$ then $2X=X(2+X)\in I$ and $2Y+2=Y(2+X)\in I$. Therefore, $2=2X+2Y+2\in I$. It follows that the ideal $\{0,2\}$ is the smallest ideal of $R$.   
}\end{exa}

In order to provide a characterization for general weakly tripotent rings, we need the following lemma. 

\begin{lem}\label{idempo}
Let $R$ be a commutative weakly tripotent ring of characteristic $2^k$ ($k\leq 3$), and let $N$ be the nil radical of $R$. Then for every idempotent $e$ in $R$ one of the following properties is true:
\begin{enumerate}[{\rm (a)}]
 \item $en=0$ for all  $n\in N$;
 \item $en=n$ for all $n\in N$.
\end{enumerate}
\end{lem}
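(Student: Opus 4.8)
The plan is to reformulate the statement in terms of the central idempotent $e$ and then force a contradiction out of weak tripotency. Since a weakly tripotent ring is in particular a ring in which every element is a sum of an idempotent and a commuting tripotent (\corref{first-incl}), I may invoke \corref{cor-prop-sset}: here $N=J(R)$, and every $n\in N$ satisfies $n^2=2n$, $4n=0$ and $n^3=0$. Because $R$ is commutative, $e$ is central, so $N=eN\oplus(1-e)N$ as a direct sum of ideals; condition (a) is exactly $eN=0$ and condition (b) is exactly $(1-e)N=0$. Thus it suffices to prove that $eN$ and $(1-e)N$ cannot both be nonzero. I also record that if $n_1\in eN$ and $n_2\in(1-e)N$ then $n_1n_2=e(1-e)n_1n_2=0$.

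Arguing by contradiction, suppose $eN\neq0$ and $(1-e)N\neq0$, and consider test elements of the form $x=e+n_1+n_2$ with $n_1\in eN$ and $n_2\in(1-e)N$. Using $en_1=n_1$, $en_2=0$, $n_1n_2=0$ together with the identities $n_i^2=2n_i$ and $4n_i=0$, a direct computation gives $x^2=e+2n_2$ and hence $x^3=e+n_1$, and similarly $(1+x)^2=1+3e+2n_1$ and $(1+x)^3=1+7e+n_2$. Consequently $x$ is tripotent precisely when $n_2=0$, and $1+x$ is tripotent precisely when $6e=n_1$. The idea is therefore to choose $n_1,n_2$ so that both conditions fail: this would show that neither $x$ nor $1+x$ is tripotent, contradicting the weak tripotency of $R$.

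Since $(1-e)N\neq0$ I can pick $n_2\in(1-e)N$ with $n_2\neq0$, which already kills tripotency of $x$. For $n_1$ I would split into two cases according to the value of $6e$ (noting $6e\in eN$, as $2\in N$): if $6e\neq0$ I take $n_1=0$, and if $6e=0$ I use the hypothesis $eN\neq0$ to pick $n_1\in eN$ with $n_1\neq0$; in either case $n_1\neq6e$, so $1+x$ is not tripotent either, and the contradiction follows. I expect the main obstacle to be exactly this last point, i.e. the handling of the term $6e$: in characteristic $2$ one has $6e=0$ automatically, so the naive test element $e+n_2$ (with $n_1=0$) is useless, because then $1+e+n_2$ turns out to be tripotent; it is precisely to neutralize this degeneracy that one must enlarge the test element by a nonzero $n_1\in eN$, which is where the assumption $eN\neq0$ genuinely enters. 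The bookkeeping in the cubing computations, keeping careful track of $4n_i=0$ and $n_i^2=2n_i$, is routine but must be done accurately for the coefficients of $e$ and of $n_1,n_2$ to come out as claimed.
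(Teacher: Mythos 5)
Your proof is correct, and all the computations check: with $n_1\in eN$, $n_2\in(1-e)N$ one indeed gets $x^2=e+2n_2$, $x^3=e+n_1$, $(1+x)^3=1+7e+n_2$ (using $n_i^2=2n_i$, $4n_i=0$, $n_1n_2=0$, justified via \corref{first-incl} and \corref{cor-prop-sset}), so tripotency of $x$ and of $1+x$ amount precisely to $n_2=0$ and $n_1=6e$, and your choice of $n_1,n_2$ (split on whether $6e=0$) violates both at once, contradicting weak tripotency. This is, however, organized genuinely differently from the paper's proof. The paper first establishes a \emph{pointwise} dichotomy: for a single nilpotent $n$, weak tripotency applied to $e+n$ --- whose partner $1-(e+n)$ equals $f-n$ with $f=1-e$ --- forces either $(e+n)^3=e+en=e+n$, whence $en=n$, or $(f-n)^3=f-n$, whence $fn=n$, i.e.\ $en=0$. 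The global statement then follows by applying this dichotomy to the \emph{single} nilpotent $n_1+n_2$, assuming $en_1=0$, $en_2=n_2$ with $n_1,n_2\neq0$: either alternative immediately collapses to $n_2=0$ or $n_1=0$. Note that the paper's route never meets your $6e$ degeneracy: the characteristic-$2$ subtlety that forces you to enlarge the naive witness $e+n_2$ by a nonzero $n_1\in eN$ simply does not arise when the dichotomy is extracted one nilpotent at a time, before any two nilpotents are combined. What your version buys is a one-shot argument with a very explicit accounting of where each hypothesis enters ($n_2\neq0$ kills tripotency of $x$; the possibility of choosing $n_1\neq 6e$ is exactly what $eN\neq0$ provides); what the paper's version buys is a lighter computation and the stronger per-element conclusion $en\in\{0,n\}$ for every nilpotent $n$, which is precisely what its combination step reuses.
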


\begin{proof} We will denote $f=1-n$. 
If $n$ is a nilpotent element then $(e+n)^3=e+n$ or $(f-n)^3=f-n$. 

Suppose that $(e+n)^3=e+n$. Using the identities $8n=0$, $n^2=2n$ and $n^3=0$ it follows that $(e+n)^3=e+en$, hence $en=n$. This also implies  that $fn=0$.

If $(f-n)^3=f-n$ it follows in the same way that $fn=n$, hence $en=0$.

Now suppose that there exists $n_1$ and $n_2$ nonzero nilpotents such that $en_1=0$ and $fn_2=0$. Since $n_1+n_2$ is nilpotent, we can suppose that $e(n_1+n_2)=0$. But this implies that $en_2=0$. Therefore $n_2=0$, a contradiction, and the proof is complete.  
\end{proof}

We have the following characterization:

\begin{thm}\label{wtrp-gen}
A commutative ring $R$ is weakly tripotent if and only if $R=R'\times R''$ such that:
\begin{enumerate}[{\rm (1)}]
 \item $R''$ is a tripotent ring of characteristic $3$ or $R''=0$;
 \item $R'=0$ or $R'$ can be embedded as a subring of a direct product 
 $R_0\times \left(\prod_{i\in I}R_i\right)$ such that $R_0$ is a weakly tripotent ring without nontrivial idempotents, and all $R_i$ are boolean rings. 
\end{enumerate}
\end{thm}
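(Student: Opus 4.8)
The plan is to prove both implications, disposing of the easy direction first. For ($\Leftarrow$), suppose $R=R'\times R''$ is as described. A product of Boolean rings is Boolean and hence tripotent, so the ambient ring $R_0\times\left(\prod_{i\in I}R_i\right)$ is a product of the weakly tripotent ring $R_0$ with tripotent rings; by \propref{product-w} it is weakly tripotent, and therefore so is its subring $R'$. Since $R''$ is tripotent of characteristic $3$ (or zero), a second application of \propref{product-w} to $R'\times R''$ shows that $R$ is weakly tripotent.

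For ($\Rightarrow$), I would first reduce to characteristic $2^k$. By the first lemma $R=R_1\times R_2$ with $8R_1=3R_2=0$; set $R''=R_2$, which is tripotent of characteristic $3$ (or zero) by the characteristic-$3$ lemma, and set $R'=R_1$. It then remains to embed the commutative weakly tripotent ring $R'$ of characteristic $2^k$ ($k\le 3$) into a product $R_0\times B$ with $R_0$ weakly tripotent without nontrivial idempotents and $B$ Boolean. Let $N$ be the nil radical of $R'$. Since $R'$ satisfies the hypotheses of \thmref{th-sitc} (by \corref{first-incl}), we have $J(R')=N$ and $R'/N$ Boolean (\corref{cor-prop-sset}). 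If $N=0$ then $R'=R'/N$ is already Boolean and we are done, so assume $N\neq 0$.

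The crux is to organize the idempotents of $R'$. By \lemref{idempo} each idempotent $e$ satisfies either $eN=0$ or $en=n$ for all $n\in N$; call the latter idempotents \emph{central to $N$}, and let $\mathcal{U}$ be the set of them. Using $n^2=2n$ and $8n=0$ from \corref{cor-prop-sset}, one checks that $1\in\mathcal{U}$, that $\mathcal{U}$ is closed under products (hence directed under $e''\le e$), and — the key point — that, since $N\neq 0$, exactly one of $e,\,1-e$ lies in $\mathcal{U}$ for every idempotent $e$; so $\mathcal{U}$ behaves like an ultrafilter on the Boolean algebra of idempotents. I then set $K=\{r\in R':er=0\text{ for some }e\in\mathcal{U}\}$, which is an ideal, and put $R_0=R'/K\cong\varinjlim_{e\in\mathcal{U}}eR'$. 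As a homomorphic image of $R'$, the ring $R_0$ is weakly tripotent. Moreover, if $r\in K\cap N$ then $er=0$ for some $e\in\mathcal{U}$, while $er=r$ because $e$ is central to $N$, forcing $r=0$; thus $K\cap N=0$ and the map $r\mapsto(r+K,\,r+N)$ embeds $R'$ into $R_0\times(R'/N)$, with the Boolean ring $B=R'/N$ as the single Boolean factor.

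The last step is to show $R_0$ has no nontrivial idempotents. Every idempotent of the filtered colimit $\varinjlim_{e\in\mathcal{U}}eR'$ is induced from an idempotent at some stage, i.e. from an idempotent $u$ of $R'$ with $u\le e$ for some $e\in\mathcal{U}$; if $u\in\mathcal{U}$ then $u(1-u)=0$ shows $1-u\in K$, so $u\equiv 1$, while if $u\notin\mathcal{U}$ then $1-u\in\mathcal{U}$ and $(1-u)u=0$ shows $u\in K$, so $u\equiv 0$. Hence $R_0$ is indecomposable, which completes the embedding. I expect this final step to be the main obstacle: the family $\mathcal{U}$ need not be principal, so one cannot in general isolate all nilpotents in a single direct factor cut out by one idempotent; it is the passage to the colimit (localization at $\mathcal{U}$) that lets a possibly infinite family of non-Boolean indecomposable pieces collapse into one weakly tripotent ring $R_0$, and the whole argument rests on the ultrafilter-type dichotomy supplied by \lemref{idempo}.
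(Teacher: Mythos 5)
Your proposal is correct, and it reaches the conclusion by a genuinely different route than the paper in the key direction. The easy implication and the reduction to characteristic $2^k$ coincide with the paper's, and both hard-direction arguments pivot on Lemma~\ref{idempo}; but from there the paper fixes, by Zorn's Lemma, an ideal $L$ maximal with respect to $L\cap J=0$, proves that all elements of $L$ are idempotents, uses Lemma~\ref{idempo} together with the maximality of $L$ to show that $R/L$ has no nontrivial idempotents, and then manufactures the Boolean factors by a Birkhoff-type subdirect decomposition: for each $0\neq x\in L$ an ideal $I_x\supseteq J$ maximal with $x\notin I_x$, each $R/I_x$ Boolean, giving $R\hookrightarrow R/L\times\prod_{0\neq x\in L}R/I_x$. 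You instead convert Lemma~\ref{idempo} into the ultrafilter-type dichotomy on idempotents, define the annihilator ideal $K=\{r\in R'\colon er=0 \text{ for some } e\in\mathcal{U}\}$ explicitly, and embed $R'$ into a product of only two rings, $R'/K\times R'/N$. Your route avoids Zorn's Lemma altogether, produces a single Boolean factor (indeed it shows the index set $I$ in the statement may always be taken to be a singleton with $R_1=R'/N$, which is consistent with the statement since a product of Boolean rings is Boolean), and obtains indecomposability of $R_0=R'/K$ directly from the dichotomy; note that the colimit language is dispensable here: if $u^2-u\in K$, pick $e\in\mathcal{U}$ with $e(u^2-u)=0$, then $eu$ is a genuine idempotent of $R'$ congruent to $u$ modulo $K$, and the dichotomy sends it to $0$ or $1$ in $R'/K$. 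What the paper's heavier machinery buys is reusability: the same $L$-and-$I_x$ technique is invoked again in the proof of Proposition~\ref{sci}, and it explains why the theorem is stated with a product $\prod_{i\in I}R_i$ rather than a single Boolean ring. Two small inaccuracies in your write-up, neither affecting correctness: Corollary~\ref{cor-prop-sset} gives $x^2=2x$, $4x=0$, $x^3=0$ (the identity $8n=0$ comes from the characteristic, not from that corollary), and these identities are not what makes $1\in\mathcal{U}$ or closure of $\mathcal{U}$ under products work --- those verifications are trivial; the substantive inputs are Lemma~\ref{idempo} and the assumption $N\neq 0$.
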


\begin{proof}
We only have to prove that if $R$ is a weakly tripotent ring of characteristic $2^k$ then it can be embedded in a direct product as in (2). 

Let $J$ be the  Jacobson radical of $R$, and recall that $J$ coincides to the set of all nilpotent elements of $R$.  We fix an ideal $L$ which is maximal with the property $J\cap L=0$. Then $L\cong (L+J)/J$ can be embedded in $R/J$. Therefore all elements of $L$ are idempotents and $2L=0$. 

Let $\overline{e}$ be an idempotent in $R/L$. Then $R/L\cong \overline{e}R/L\times \overline{f}R/L$, where  $\overline{f}=1-\overline{e}$. Since $(J+L)/L\subseteq J(R/L)$ we can suppose by Lemma \ref{idempo} that $(J+L)/L\subseteq
\overline{e}R$. Therefore, if $K$ is an ideal in $R$ such that $L\subseteq K$ and $K/L\cong \overline{f}R/L$ then $K\cap J=0$. By the maximality of $L$ it follows that $K=L$, hence $R/L$ has no nontrivial idempotents.

In order to complete the proof, we proceed as in the proof of Birkhoff's Theorem, \cite[Theorem 12.3]{Lam-first}.
For every $0\neq x\in L$ we choose an ideal $I_x$ which is maximal with the property $x\notin I_x$. By Zorn's Lemma we can choose $I_x$ such that $J\subseteq I_x$. Therefore $R/I_x$ is Boolean, and there exists an injective ring homomorphism $R\to R/L\times \left(\prod_{0\neq x\in L}R/I_x\right)$ which provides a subdirect decomposition for $R$.   
\end{proof}

We complete this theorem with an example of a weakly tripotent ring $R$ as in Theorem \ref{wtrp-gen}(2) such that it has no a direct factor isomorphic to $R_0$. 

\begin{exa}
{\rm Let $R_0$ be a subdirectly irreducible weakly tripotent ring. For every positive integer $k$ we fix a ring $R_k\cong \Z_2$. In the direct product $\prod_{k\geq 0}R_k$ we consider the ring $R$ generated by $1=(1_k)_{k\geq 0}$ and the ideal $J(R_0)\oplus(\oplus_{k\geq 1}R_k)$ of all families of finite support in $\prod_{k\geq 0}R_k$ such that the $0$-th component is in $J(R_0)$. It is easy to see that $R$ has the desired properties.  
}\end{exa}

\begin{rem}
Let us remark that the ring $R_0$ from Theorem \ref{wtrp-gen} is not necessarily subdirectly irreducible. In order to see this, we consider the ring $R$ from Example \ref{exemple-fi}(1) with $N=Z_2\times Z_2$. This ring is not subdirectly irreducible since for every subgroup $S\leq N$ the set $I_S= \{\left(\begin{array}{cc}
0 & n\\
0&0                                                                                                                                                                                                             \end{array}\right)\in R\mid x\in S\}$ is a minimal ideal of $R$. Therefore, if we view $R$ as a subdirect product of a family of subdirectly irreducible rings, at least two factors of this product are not tripotent rings. 
\end{rem}

Using the same proof as in Theorem \ref{wtrp-gen} we obtain:

\begin{prop}\label{sci}
The following are equivalent for a ring $R$:
\begin{enumerate}[{\rm (a)}]
\item every element of $R$ is a sum or a difference of two commuting idempotents; 

 \item $R$ is isomorphic to a subring of a direct product $R_0\times R_2\times R_2$ such that $R_0$ is $0$ or $\Z_4$, $R_1$ is Boolean, and $R_2$ is a subdirect product of a family of copies of $\Z_3$;
 \end{enumerate}
\end{prop}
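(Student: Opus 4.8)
The plan is to prove both implications by following the proof of \thmref{wtrp-gen} as closely as possible, the genuinely new points being that the non-Boolean, non-characteristic-$3$ factor is now forced to be $\Z_4$, and that the converse requires a direct construction because the property in (a), unlike weak tripotency, is not visibly inherited by subrings. Throughout I use that class (a) is closed under homomorphic images and is contained both in the weakly tripotent rings and in the class of \thmref{th-sitc}, so that \corref{cor-prop-sset} and \lemref{idempo} are available. A first step is to see that $R$ is commutative: by \corref{abelian} it is enough to check that every idempotent $e$ is central, i.e.\ that $a=er(1-e)$ vanishes for every $r$. Here $a^2=0$, $ea=a$ and $ae=0$; writing $a$ as a sum or a difference of two commuting idempotents, the difference case gives $a=a^{3}=0$ at once, while in the sum case the two idempotents must coincide (because $a^2=0$), so $a=2e_1$ with $e_1^2=e_1$, and then $2e_1e=0$ together with $2ee_1=2e_1$ forces $2e_1=0$, hence $a=0$.

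With commutativity established, the first Lemma writes $R=A\times B$ with $A$ of characteristic $2^k$ and $B$ of characteristic $3$; by the characteristic-$3$ Lemma $B$ is tripotent, hence a subdirect power of $\Z_3$, and this is the factor $R_2$. On $A$ I would then run the argument of \thmref{wtrp-gen} verbatim: take $J=J(A)$ (the nilradical), choose an ideal $L$ maximal with $J\cap L=0$ so that $L$ is Boolean, use \lemref{idempo} to conclude that $A/L$ has no nontrivial idempotents, and obtain by the Birkhoff procedure an embedding $A\hookrightarrow (A/L)\times\prod_x A/I_x$ in which every $A/I_x$ is Boolean.

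The one new step is the identification of $A/L$. It is a class (a) ring of characteristic $2^k$ whose only idempotents are $0$ and $1$, so each of its elements is a sum or difference of elements of $\{0,1\}$ and therefore lies in $\{-1,0,1,2\}$, a set of integer multiples of the identity. Thus $A/L$ is generated by $1$, i.e.\ a quotient of $\Z$ of characteristic a power of $2$ with at most four elements, leaving only $0$, $\Z_2$ and $\Z_4$. Absorbing a possible $\Z_2$ together with all the $A/I_x$ into a single Boolean ring $R_1$ (a subring of a product of Boolean rings is Boolean) and setting $R_0=A/L\in\{0,\Z_4\}$ gives the embedding $R\hookrightarrow R_0\times R_1\times R_2$ of (b).

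For (b)$\Rightarrow$(a) the idempotents witnessing each $x$ must be produced inside $R$, so I would exhibit them as polynomials in $x$. The element $\epsilon=4\cdot 1_R$, which equals $(0,0,1)$, is a central idempotent splitting off the characteristic-$3$ part: $R=\epsilon R\times(1-\epsilon)R$ with $\epsilon R\subseteq R_2$ and $(1-\epsilon)R\subseteq R_0\times R_1$. On the tripotent factor $\epsilon R$ every element $\beta$ is simultaneously a sum and a difference of commuting idempotents which are polynomials in $\beta$ (for instance $\beta^2$ and $2\beta^2\pm\beta$, using that $2$ is a unit in characteristic $3$). On $(1-\epsilon)R\subseteq\Z_4\times R_1$ the ideal generated by $2\cdot 1$ has order at most two and Boolean quotient, so every $\alpha$ satisfies $\alpha^2=\alpha$ or $\alpha^2=\alpha+2$; in the first case $\alpha=\alpha+0$, and in the second exactly one of $\alpha-1$, $\alpha+1$ is idempotent (according to whether $2\alpha=0$ or $2\alpha=2$), giving $\alpha=1+(\alpha-1)$ or $\alpha=(\alpha+1)-1$. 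Matching the sign on the characteristic-$3$ factor, which accommodates either, to the sign forced on the $\Z_4$-part completes the proof. I expect the main difficulty to be exactly this control of the idempotents inside $R$: it is where class (a) departs from weak tripotency, and the reason a single copy of $\Z_4$ suffices (rather than an arbitrary local ring as in \thmref{wtrp-gen}) is that it keeps the relevant eigen-idempotents on the $2$-primary part expressible as polynomials in $x$.
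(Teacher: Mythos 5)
Your proposal has one genuine logical flaw: the blanket assumption, announced at the outset and used throughout, that class (a) is contained in the class of weakly tripotent rings. In the paper's development that containment is exactly \corref{inc-sci-wtri}, and it is deduced \emph{from} \propref{sci} together with \thmref{wtrp-gen}; invoking it while proving \propref{sci} is circular, and it is not an elementary fact (for $x=e_1+e_2$ with commuting idempotents one has $x^3-x=6e_1e_2$ and $(1+x)^3-(1+x)=6x+12e_1e_2$, and neither vanishes without structural information about the characteristic and $J(R)$). The assumption enters your argument three times: the appeal to the first Lemma for the splitting $R=A\times B$ by characteristic, the appeal to the characteristic-$3$ Lemma for $B$, and -- most seriously -- the appeal to \lemref{idempo}, which is stated and proved only for commutative \emph{weakly tripotent} rings and is the engine of your Birkhoff step: without it you cannot conclude that $A/L$ has no nontrivial idempotents. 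The first two uses are easy to repair inside class (a) (writing $3=e_1\pm e_2$ gives $24=0$ in either case; in characteristic $3$ every $e_1\pm e_2$ is tripotent since $6=0$). The third needs a real substitute: either re-prove \lemref{idempo} for class (a) -- using \corref{cor-prop-sset} ($n^2=2n$, $n^3=0$, $4n=0$ for nilpotent $n$), the case $e+n=e_1-e_2$ gives $en=n$ and the case $e+n=e_1+e_2$ gives $en=0$, after which the $n_1+n_2$ trick in the paper's proof of \lemref{idempo} carries over -- or do what the paper does: quote \cite[Theorem 4.4]{trip} to reduce to the situation $J(R)=\{0,2\}$, where the dichotomy ``$2e=0$ or $2(1-e)=0$'' is immediate because $2e\in J(R)$. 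As written, your proposal contains neither.

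Everything else checks out, and in two places you improve on the paper's text. Your direct proof of abelianness is correct even though you state its key claims without verification: in the sum case $a=e_1+e_2$ with $a^2=0$ one gets $e_1+e_2=-2e_1e_2$, and multiplying by $e_1$ and by $e_2$ gives $e_1=e_2=-3e_1e_2$; then left-multiplying $2ee_1=2e_1$ by $e_1$ and using $2e_1e=0$ indeed forces $2e_1=0$. (The paper instead quotes \cite[Lemma 4.2(5)]{trip}.) Your identification of $A/L$ -- all its elements lie in $\{-1,0,1,2\}$, so it is $0$, $\Z_2$ or $\Z_4$ -- is a clean replacement for the paper's remark that subdirectly irreducible factor rings are $\Z_2$ or $\Z_4$. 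Finally, you are right that (b)$\Rightarrow$(a) deserves an argument (the paper dismisses it as obvious) precisely because class (a) is not inherited by subrings, and your polynomial-in-$x$ idempotents handle this correctly, up to one computational slip: on the characteristic-$3$ factor the difference decomposition should be $\beta=(2\beta^2+2\beta)-(2\beta^2+\beta)$; your pair $\beta^2$, $2\beta^2-\beta$ gives the sum case but not the difference case.
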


\begin{proof}
(a)$\Rightarrow$(b) Let $R$ be a ring such that every element is a sum or a difference of two commuting idempotents. From \cite[Lemma 4.2(5)]{trip} it follows that $R$ is abelian, hence $R$ is commutative by Corollary \ref{abelian}. 


By \cite[Theorem 4.4]{trip} we have $R=R_1\times R_2$ such that $R_1/J(R_1)$ is Boolean such that $J(R_1)=0$ or $J(R_1)=\{0,2\}$ and $R_2$ is a tripotent ring, so it is enough to assume that $R/J(R)$ is Boolean such that  $J(R)=\{0,2\}$. In this hypothesis, we observe that for every idempotent $e\in R$ we have $2e=0$ or $2(1-e)=0$. Therefore, we can apply the same technique as in the proof of Theorem \ref{wtrp-gen} to conclude that $R$ is isomorphic to a subring of a direct product of a subdirectly irreducible factor ring of $R$ and a Boolean ring. Since in this case subdirectly irreducible factor rings of $R$ are isomorphic to $\Z_2$ or $\Z_4$, the proof is complete.

(b)$\Rightarrow$(a) This is obvious.
\end{proof}

\begin{cor}\label{inc-sci-wtri}
If every element of a ring $R$ is a sum or a difference of two commuting idempotents then $R$ is weakly tripotent. The converse is not true.
\end{cor}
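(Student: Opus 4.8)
The plan is to treat the two assertions of the corollary separately, and for the forward implication to avoid the structure theorems entirely. The tool I would use is the earlier proposition stating that $R$ is weakly tripotent if and only if for every $x\in R$ at least one of $x$ or $1-x$ is tripotent. First I would record the elementary fact that a difference of two commuting idempotents is always tripotent: if $e^2=e$, $f^2=f$ and $ef=fe$, then expanding $(e-f)^3$ and using $e^2=e$, $f^2=f$ and $fef=ef$ shows that all the terms in $ef$ cancel, so $(e-f)^3=e-f$. Granting this, let $x\in R$ be arbitrary, so that $x$ is a sum or a difference of two commuting idempotents $e,f$. If $x=e-f$, then $x$ is already tripotent. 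If $x=e+f$, then I would rewrite $1-x=(1-e)-f$; since $1-e$ is an idempotent commuting with the idempotent $f$, this displays $1-x$ as a difference of two commuting idempotents, hence tripotent. In either case one of $x$, $1-x$ is tripotent, so the quoted proposition gives that $R$ is weakly tripotent.

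For the converse I need to exhibit a weakly tripotent ring that does not belong to the smaller class. The cleanest witness is the noncommutative ring $T_2(\Z_2)$, which the preceding example already records as weakly tripotent. On the other hand, the proof of Proposition \ref{sci} shows that any ring in which every element is a sum or a difference of two commuting idempotents is abelian, and hence commutative by Corollary \ref{abelian}. Since $T_2(\Z_2)$ is not commutative, not all of its elements can be written as a sum or a difference of two commuting idempotents, and the inclusion is therefore strict.

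I do not expect a genuine obstacle here: the only point requiring care is the bookkeeping in the identity $(e-f)^3=e-f$, namely checking that the $ef$-terms cancel, together with invoking the right earlier results in the correct order. As a sanity check and an alternative route for the forward direction, one could instead embed $R$ into $\Z_4\times(\text{Boolean})\times(\text{subdirect product of copies of }\Z_3)$ via Proposition \ref{sci} and then apply Proposition \ref{product-w}, using that $\Z_4$ is weakly tripotent while all the other factors are tripotent; but the direct argument above is shorter and sidesteps having to verify the hypotheses of Proposition \ref{product-w}.
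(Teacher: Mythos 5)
Your proof is correct, but it follows a genuinely different route from the paper's. For the forward implication the paper simply invokes the structure theory it has built: Proposition~\ref{sci} embeds $R$ into a product of $\Z_4$ (or $0$), a Boolean ring, and a subdirect product of copies of $\Z_3$, and then Theorem~\ref{wtrp-gen} (together with the closure of weak tripotence under subrings) yields the conclusion. Your argument is instead elementwise and elementary: the identity $(e-f)^3=e-f$ for commuting idempotents (which does check out, since $(e-f)^2=e-2ef+f$ and the $ef$-terms cancel in the cube), plus the observation that $1-(e+f)=(1-e)-f$ is again a difference of commuting idempotents, combined with the proposition that weak tripotence is equivalent to ``$x$ or $1-x$ tripotent for all $x$''. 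This buys a short, self-contained proof that needs no commutativity, no characteristic analysis, and none of the structure theorems; the paper's version is a one-line citation but rests on two substantial results. For the failure of the converse, the paper takes $\Z_8$: its only idempotents are $0$ and $1$, so sums and differences of two commuting idempotents produce only $\{0,1,2,7\}$ and miss, e.g., $3$, while weak tripotence of $\Z_8$ is a quick direct check. You instead take $T_2(\Z_2)$ and argue by noncommutativity, using that every ring in the smaller class is abelian (via the proof of Proposition~\ref{sci}) and hence commutative by Corollary~\ref{abelian}. Both witnesses work; the paper's is more self-contained, since your route leans on the Example asserting that $T_2(\Z_2)$ is weakly tripotent (stated in the paper without proof) and on the abelian-implies-commutative machinery. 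Your suggested alternative for the forward direction (Proposition~\ref{sci} plus Proposition~\ref{product-w} plus closure under subrings) is essentially the paper's own argument spelled out, and it is also sound.
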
 

\begin{proof}
The first statement follows from Proposition \ref{sci} and Theorem \ref{wtrp-gen}. 

In order to see that the converse is not true, let us observe that $\Z_8$ is weakly tripotent, but not all elements of $R$ are sums or differences of two commuting idempotents.
 \end{proof}




\end{document}